\author{C\.{i}han Bahran}
\affil{School of Mathematics, University of Minnesota\\
Minneapolis, MN 55455, USA}
\date{}
\title{The commuting complex of the symmetric group with bounded number of $p$-cycles}
\DeclareMathOperator{\FI}{\mathbf{FI}}
\DeclareMathOperator{\FB}{\mathbf{FB}}
\DeclareMathOperator{\co}{H}
\DeclareMathOperator{\chain}{C}
\DeclareMathOperator{\sh}{\FI_{\pmb{\sharp}}}
\newcommand{\weak}{\delta}
\newcommand{\sym}[1]{
\ifstrempty{#1}{\mathfrak{S}_{\bullet}}{\mathfrak{S}_{#1}}
}
\newcommand{\local}{h^{\text{max}}}
\newcommand{\clp}[1]{\mathbf{K}_p(#1)}
\newcommand{\clt}[1]{\mathbf{K}_2(#1)}
\newcommand{\abp}[1]{\mathcal{A}_p(#1)}
\newcommand{\lap}[1]{\Lambda_p(#1)}
\newcommand{\lat}[1]{\Lambda_2(#1)}
\newcommand{\bul}{\bullet}
   \def\MR#1{}
\def\blfootnote{\gdef\@thefnmark{}\@footnotetext}
\begin{document}
\maketitle
\blfootnote{\textup{2010} \textit{Mathematics Subject Classification}.
Primary 20D30; Secondary 20B30, 20E15.} 
\blfootnote{\textit{Key words and phrases}. Quillen complex, symmetric group, highly acyclic, $\FI$-modules.}
\vspace{-0.8 in}
\begin{onecolabstract} 
 For a fixed prime $p$, we consider a filtration of the commuting complex of elements of order $p$ in the symmetric group $\sym{n}$. The filtration is obtained by imposing successively relaxed bounds on the number of disjoint $p$-cycles in the cycle decomposition of the elements. We show that each term in the filtration becomes highly acyclic as $n$ increases. We use $\FI$-modules in the proof.
\end{onecolabstract}
\vspace{0.4 in}



The commuting graph $\lap{G}$ of a finite group $G$ at the prime $p$ is the graph whose vertices are elements of order $p$ in $G$ with edges connecting elements that commute. Writing $\mathbf{K}(\Lambda)$ for the clique complex of a graph $\Lambda$, the \textbf{commuting complex} of  $G$ is $\clp{G}:=\mathbf{K}(\lap{G})$. In other words, a $(k+1)$-simplex in  $\clp{G}$ is a subset of $G$ with size $k$, whose elements are of order $p$ that pairwise commute. 

 As a poset, $\clp{G}$ has an evident Galois connection with the Quillen poset $\abp{G}$ \cite[Section 2]{quillen-Ap} of nontrivial elementary abelian $p$-subgroups of $G$, yielding a homotopy equivalence. When $G$ is a finite group of Lie type, $\clp{G}$ is homotopy equivalent to the Tits building of $G$ \cite[Section 3]{quillen-Ap} (also see \cite[Remark 2.3(iv)]{tw-equiv}). For an arbitrary finite group $G$, the complex $\clp{G}$ (or different incarnations of its $G$-homotopy type \cite{tw-equiv}) contains significant information about the representations \cite{knorr-rob}, \cite{thev-k-euler}, \cite{balmer-spg}, \cite{grodal-endo} and cohomology \cite{webb-local}, \cite{webb-split}, \cite{grodal-higher}, \cite{flores-webb-split}, \cite{symonds-bredon} of $G$ and its $p$-local subgroups in characteristic $p$. For a ``big picture'' point of view as to how $\clp{G}$ fits in the finite group theory landscape, I recommend Webb's \cite{webb-sub} and Alperin's \cite{alperin-lie-approach} surveys. Smith's book \cite{smith-subgroup-cx} is a more recent and  extensive reference.

The focus of this paper is the case $G = \sym{n}$, the symmetric group on $n$ letters. For each $a \geq 0$, let us write $\lap{\sym{n},a}$ for the induced subgraph of the commuting graph $\lap{\sym{n}}$ on elements that can be written as a product of \textbf{at most $\pmb{a}$} number of disjoint $p$-cycles. For example $\{(1\,2)(3\,4), (1\,3)(2\,4), (1\,4)(2\,3), (5\,6)(7\,8)(9\, 10)\}$ is a clique in $\lat{\sym{10},3}$. The clique complexes $\clp{\sym{n},a} := \mathbf{K}(\lap{\sym{n},a})$ provide a natural filtration
\begin{align*}
 \empt = \clp{\sym{n},0} \subseteq \clp{\sym{n},1} \subseteq \cdots \subseteq \clp{\sym{n},\floor{n/p}} = \clp{\sym{n}} \, .
\end{align*}
For each $a \geq 1$, the complex $\clp{\sym{n},a}$ has dimension $\floor{n/p}-1$, with the $a=1$ case being the easiest in terms of the combinatorics involved. Still, $\clp{\sym{n},1}$ is already interesting. After Bouc's computation of the fundamental group $\pi_{1}(\clt{\sym{7},1}) = \zz / 3$ \cite[Proposition 3]{bouc-certain}, combinatorialists have found a wealth of torsion in similarly defined \emph{matching complexes}  \cite{shareshian-wachs-matching}, \cite{jonsson-exact}, \cite{jonsson-five-torsion}. Also, $\clp{\sym{n},1}$ has been used to great effect in understanding the more mysterious $\clp{\sym{n}}$ \cite{ksontini-simp}, \cite{ksontini-fund}, \cite{shareshian-Ap}, \cite{shareshian-wachs-Ap}.

We now fix some terminology. All of our homology groups are over $\zz$. We say that a simplicial complex $X$ is \textbf{$\pmb{k}$-acyclic} if the reduced homology $\wt{\co_{t}}(X_{n})$ vanishes for $t \leq k$. We call a sequence $\{X_{n}\}$ of simplicial complexes is \textbf{highly acyclic} if for each $k \geq 0$, the complex $X_{n}$ is $k$-acyclic when $n$ is sufficiently large. 

Ksontini showed that $\clp{\sym{n}}$ is connected for $n \geq 2p + 1$ \cite[Proposition 2.4]{ksontini-simp}, and $\clp{\sym{n}}$ is simply connected for $n \geq p^{2} + p + 1$ \cite[Theorems 5.2, 5.3]{ksontini-simp}, providing evidence for the following: 
\begin{conj} \label{sym-conj}
 The sequence $\{\clp{\sym{n}}\}$ is highly acyclic.
\end{conj}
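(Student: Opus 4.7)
The plan is to combine the filtration $\clp{\sym{n}, 0} \subseteq \clp{\sym{n}, 1} \subseteq \cdots \subseteq \clp{\sym{n}, \floor{n/p}} = \clp{\sym{n}}$ with the main theorem of this paper, which asserts that $\{\clp{\sym{n}, a}\}_{n}$ is highly acyclic for each \emph{fixed} $a$. The main theorem does not immediately imply the conjecture, because the top filtration index $\floor{n/p}$ grows with $n$, so the upper layers of the filtration are never individually controlled by it. The challenge is to show that those upper layers contribute nothing to low-dimensional homology.

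Fix $k \geq 0$ and aim to prove $\wt{\co_{k}}(\clp{\sym{n}}) = 0$ for all sufficiently large $n$. I would use the spectral sequence of the filtration, with first page $E^{1}_{a,q} = \wt{\co_{a+q}}(\clp{\sym{n}, a},\, \clp{\sym{n}, a-1})$, converging to $\wt{\co_{a+q}}(\clp{\sym{n}})$. It suffices to show $E^{1}_{a,\,k-a} = \wt{\co_{k}}(\clp{\sym{n}, a},\, \clp{\sym{n}, a-1})$ vanishes for every $a$ once $n$ is large. For $1 \leq a \leq k+1$ this is a finite problem: for each such fixed $a$, the main theorem applied to both $\clp{\sym{n}, a}$ and $\clp{\sym{n}, a-1}$ together with the long exact sequence of the pair forces the relative $\co_{k}$ to vanish for $n \gg 0$. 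The genuine work is in handling $a > k+1$, where the parameter grows with $n$.

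To attack large $a$, I would analyze the pair $(\clp{\sym{n}, a},\, \clp{\sym{n}, a-1})$ stratum by stratum. A relative simplex must contain at least one vertex of \textbf{maximal type}, i.e., an element of order $p$ with exactly $a$ disjoint $p$-cycles; stratifying by the set of such maximal vertices exhibits the relative complex as glued from stars, whose links decompose in terms of commuting complexes on the complement of the joint support of the maximal vertices. The $\sym{n}$-conjugation action endows the relative chain complex with an $\FI$-module structure in $n$, which I would use to propagate the main theorem's stability through this decomposition and yield a uniform vanishing range for $\wt{\co_{j}}(\clp{\sym{n}, a},\, \clp{\sym{n}, a-1})$ when $j \leq k$ and $a > k+1$.

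The main obstacle is precisely this uniform relative vanishing for large $a$. $\FI$-module finite generation only delivers polynomial growth of dimensions, not the vanishing needed for acyclicity, so a structural input is required over and above $\FI$-module stability. Promising candidates are a shelling of the top stratum, a discrete Morse matching keyed on $p$-cycle counts, or a Cohen-Macaulay-type depth estimate for stars of maximally $p$-cycled elements; heuristically, maximal-type elements occupy almost all of $[n]$ and impose rigid commutation constraints, which should force their stars to be highly connected. Upgrading this heuristic into a proof is, in my view, the decisive difficulty standing between the main theorem of the paper and the full conjecture.
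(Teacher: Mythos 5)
This statement is Conjecture~\ref{sym-conj}, which the paper explicitly leaves open: the author says the paper ``came out of an unsuccessful attempt at proving Conjecture \ref{sym-conj} and instead settling with the $\clp{\sym{n},a}$.'' So there is no proof in the paper to compare against, and you have not produced one either --- you say so yourself at the end. Your write-up is best read as an honest analysis of the obstruction, and on that score it is mostly well aimed: the filtration spectral sequence handles the columns $a \leq k+1$ by Theorem~\ref{main-thm} and the long exact sequence of the pair, and the genuine problem is a vanishing for the relative groups $\wt{\co_{j}}(\clp{\sym{n},a},\clp{\sym{n},a-1})$, $j \leq k$, that is \emph{uniform} in $a$ as $a$ grows with $n$. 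That is indeed where the argument stalls, and your proposed remedies (shellings, discrete Morse matchings keyed on $p$-cycle counts, Cohen--Macaulayness of stars) are plausible directions but remain heuristics.

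One point of framing you should correct. You write that ``$\FI$-module finite generation only delivers polynomial growth of dimensions, not the vanishing needed for acyclicity.'' That is not the bottleneck here: the paper's Proposition~\ref{fg-tors-ok} shows finite generation \emph{plus} torsion does give eventual vanishing, and Theorem~\ref{torsion} already proves torsion for the full $\co_{k}(\clp{\sym{\bul}})$. What is actually missing is the finite generation of $\co_{k}(\clp{\sym{\bul}})$. The chain $\FI$-modules $\chain_{k}(\clp{\sym{\bul}})$ are \emph{not} finitely generated (a $k$-simplex can use arbitrarily many letters once the cycle count is unbounded), so Noetherianity does not apply directly; this is exactly what the paper's reformulated Conjecture~\ref{sym-conj}$'$ isolates, asking for a quasi-isomorphic complex of finitely generated $\FI$-modules. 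Your spectral-sequence scheme, were the uniform relative vanishing established, would in effect manufacture such a replacement complex; so you and the paper are pointing at the same missing ingredient, just phrasing it differently. As it stands, though, the proposal has a real and acknowledged gap and does not prove the conjecture.
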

This paper came out of an unsuccessful attempt at proving Conjecture \ref{sym-conj} and instead settling with the $\clp{\sym{n},a}$.
\begin{thmx} \label{main-thm}
 For each $a \geq 1$, the sequence $\{\clp{\sym{n},a}\}$ is highly acyclic. More precisely, $\clp{\sym{n},a}$ is $k$-acyclic for $n \geq 2(k+2)ap -1$.
\end{thmx}
Some low degree cases of Theorem \ref{main-thm} are known with sharp bounds. Ksontini showed that $\clp{\sym{n},1}$ is connected when $n \geq 2p+1$, see the first part of the proof of \cite[Proposition 2.4]{ksontini-simp}. For sharpness, note that $\clt{\sym{4},1}$ has three connected components. Again Ksontini showed that $\clp{\sym{n},1}$ is simply connected for $n \geq 3p + 2$ \cite[Proposition 4.1]{ksontini-simp}, where the $\clt{\sym{n},1}$ case is due to earlier work of Bouc \cite[Proposition 2]{bouc-certain}. That the $3p+2$ bound is sharp follows from another paper of Ksontini \cite[Lemma 3.5, Proposition 4.1]{ksontini-fund}. This seems to suggest that the vanishing slope in Theorem \ref{main-thm} could be halved.

Our point of view in studying the homology $\co_{k}(\clp{\sym{n}, a})$ will be to fix 
\begin{itemize}
 \item the prime $p$,
 \item the homological degree $k$,
 \item the maximum allowable number of $p$-cycles $a$ in a single element,
\end{itemize}
and then to vary $n$. The assignment $n \mapsto \clp{\sym{n},a}$ defines a functor $\clp{\sym{\bul},a}$ from $\FI$ (finite sets and injections) to simplicial complexes, or shortly an $\FI$\textbf{-complex}. Thus $\co_{k}(\clp{\sym{\bul},a})$ is an $\FI$-module. There is similarly an $\FI$-module $\co_{k}(\clp{\sym{\bul}})$, where there is no imposed bound on the number of $p$-cycles. For motivations and an introduction to $\FI$-modules, see the first two sections of Church--Ellenberg--Farb \cite{cef}. All of our $\FI$-modules will be over $\zz$.

For a given $\FI$-module $V$ and an injection $f\colon S \emb T$ between finite sets, we often write $f_{*}: V_{S} \rarr V_{T}$ for the transition map. We call an $\FI$-module $V$ \textbf{torsion} if for every finite set $S$ and $v \in V_{S}$, there exists an injection $f \colon S \emb T$ such that $f_{*}(a) = 0$. We use the term \textbf{finitely generated} for an $\FI$-module as defined in \cite[Definition 1.2]{cef}. The following is a basic, but quite an important observation:

\begin{prop} \label{fg-tors-ok}
Suppose that $\{X_{n}\}$ is a sequence of simplicial complexes such that the assignment 
\begin{align*}
 n \mapsto X_{n}
\end{align*}
extends to a $\FI$-complex $X_{\bul}$. If the $\FI$-module $\co_{k}(X_{\bul})$ is finitely generated and torsion for every $k \geq 0$, then the sequence $\{X_{n}\}$ is highly acyclic.
\end{prop}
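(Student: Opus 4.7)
The plan is to reduce Proposition \ref{fg-tors-ok} to the following structural lemma about $\FI$-modules, which I will prove first: any finitely generated torsion $\FI$-module $V$ has $V_{n} = 0$ for all sufficiently large $n$. Granting this, the proposition follows at once. For each fixed $k \geq 0$, I apply the lemma to each $\FI$-module $\co_t(X_\bullet)$ with $0 \leq t \leq k$, interpreting the $t=0$ case via reduced homology (since $\co_0(X_\bullet)$ being torsion in the unreduced sense would force $X_n$ to be empty for large $n$), and obtain integers $N_t$ with $\wt{\co_t}(X_n) = 0$ for $n \geq N_t$. Setting $N = \max_{t \leq k} N_t$ then produces a threshold beyond which $X_n$ is $k$-acyclic. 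Letting $k$ vary yields the claimed high acyclicity.

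To prove the lemma, I choose a finite generating set $v_1, \ldots, v_r$ for $V$ with each $v_i \in V_{S_i}$. The torsion hypothesis supplies, for each $i$, an injection $f_i \colon S_i \emb T_i$ annihilating $v_i$. Let $N = \max_i |T_i|$. For $n \geq N$ and any injection $g \colon S_i \emb \underline{n}$, I claim $g_*(v_i) = 0$. Indeed, since $n \geq |T_i|$, fix any injection $h \colon T_i \emb \underline{n}$; then $h \circ f_i$ is an injection $S_i \emb \underline{n}$. Because $\sym{n}$ acts transitively on the set of injections $S_i \emb \underline{n}$, there is a permutation $\sigma \in \sym{n}$ with $g = \sigma \circ h \circ f_i$, whence
\[
 g_*(v_i) \;=\; \sigma_* \, h_* \, f_{i*}(v_i) \;=\; \sigma_*\, h_*(0) \;=\; 0.
\]
Since $V_n$ is spanned as an abelian group by elements of the form $g_*(v_i)$, which is the content of finite generation, we conclude $V_n = 0$.

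No substantive obstacle arises here; the argument is a direct unwinding of definitions and uses none of the deeper structural theory of $\FI$-modules. The key point is that $\FI$-torsion is much more rigid than ordinary module-theoretic torsion: a single witness injection annihilating a generator, combined with the transitive $\sym{n}$-action on injections with that source, propagates to kill every transition of the generator into $V_n$ as soon as $n$ is large enough to receive the target of the witness. Combined with finite generation, this immediately yields eventual vanishing.
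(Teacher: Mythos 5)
Your proof is correct and essentially the same as the paper's: both isolate the key fact that a finitely generated torsion $\FI$-module vanishes in sufficiently large degrees, proved by taking a finite generating set, using the torsion hypothesis to produce witness injections $f_i$ annihilating each generator, and then observing that for $n$ at least the maximum target size, every transition of a generator into degree $n$ factors through the corresponding $f_i$ (you phrase this via the transitive $\sym{n}$-action on injections $S_i \emb \underline{n}$, the paper says "factors through $\iota^a$" directly — same content). Your aside about reduced versus unreduced $\co_0$ is a sensible clarification of a notational ambiguity the paper leaves implicit.
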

\begin{proof}
 We want to show that for every $k$, the assignment $n \mapsto \co_{k}(X_{n})$ vanishes for all but finitely many values of $n$, which is the same thing with showing that the $\FI$-module $\co_{k}(X_{\bul})$ vanishes on sufficiently large finite sets. Fix $k$, and let $A$ be a finite list of generators for $\co_{k}(X_{\bul})$ such that each $a \in A$ lies in $\co_{k}(X_{S_{a}})$ for some finite set $S_{a}$. Because $\co_{k}(X_{\bul})$ is torsion, there exists injections $\iota^{a} \colon S_{a} \emb T_{a}$ such that $\iota^{a}_{*}(a) = 0$. Defining $N := \max\{|T_{a}|: a \in A\}$, we claim that for any finite set $S$ with at least $N$ elements we have $\co_{k}(X_{S}) = 0$. This is because $\co_{k}(X_{S})$ is generated as an abelian group by  
\begin{align*}
  \{f_{*}(a): a\in A, \, f \colon S_{a} \emb S\}
\end{align*}
and every such injection $f \colon S_{a} \emb S$ factors through $\iota^{a}$, hence $f_{*}(a) = 0$. 
\end{proof}


\begin{thm} \label{torsion}
 The $\FI$-modules $\co_{k}(\clp{\sym{\bul}})$ and $\co_{k}(\clp{\sym{\bul},a})$ are torsion for every $a,k \geq 1$.
\end{thm}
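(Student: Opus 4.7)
The plan is a direct coning argument, exploiting the fact that any simplicial cycle has finite support while $\sym{\bul}$ always has room for one more disjoint $p$-cycle. Fix $a, k \geq 1$, and let a class in $\co_{k}(\clp{\sym{n},a})$ be represented by a simplicial cycle $z = \sum_{i} m_{i}\tau_{i}$. Each vertex of each $\tau_{i}$ is an order $p$ element of $\sym{n}$ moving at most $ap$ letters, so the union $M \subseteq \{1,\ldots,n\}$ of the supports of all vertices appearing in $z$ is finite.

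I would then take $N = n + p$, let $f \colon \{1,\ldots,n\} \emb \{1,\ldots,N\}$ be the standard inclusion, and let $\sigma := (n+1 \; n+2 \; \cdots \; N)$ be a \emph{fresh} $p$-cycle on the newly added letters. Then $\sigma$ has order $p$ and is a single $p$-cycle, hence $\sigma$ is itself a vertex of $\clp{\sym{N},a}$ (using $a \geq 1$). Its support is disjoint from $M$, so $\sigma$ commutes with every vertex appearing in $f_{*}z$; consequently $\{\sigma\} \cup \tau_{i}$ is a $(k+1)$-simplex of $\clp{\sym{N},a}$ for each $i$, and the cone chain $\sigma * f_{*}z := \sum_{i} m_{i}(\sigma * \tau_{i})$ is a well-defined element of the simplicial chain complex of $\clp{\sym{N},a}$. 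The standard identity $\partial(\sigma * w) = w - \sigma * \partial w$ together with $\partial z = 0$ yields $\partial(\sigma * f_{*}z) = f_{*}z$, so $f_{*}[z] = 0$ in $\co_{k}(\clp{\sym{N},a})$. This is exactly the torsion condition.

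The same argument handles $\clp{\sym{\bul}}$ verbatim, since the $a$-bound was only invoked to confirm $\sigma$ remains admissible after inclusion. I do not foresee any real obstacle here: the content is just that a single fresh $p$-cycle commutes with any prescribed finite collection of permutations and so serves as a universal cone apex once $n$ is enlarged by at least $p$. The genuinely nontrivial work in the paper sits elsewhere, namely in combining this torsion statement with finite generation of $\co_{k}(\clp{\sym{\bul},a})$ to extract the explicit acyclicity range of Theorem \ref{main-thm}.
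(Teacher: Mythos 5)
Your argument is correct and is essentially the paper's own proof: the paper likewise adjoins a fresh $p$-cycle $\sigma$ supported on $p$ new letters and uses it as a cone apex, only phrasing the conclusion as a null-homotopy of the whole simplicial map $f_{*}$ via the poset zigzag $f_{*}(Q) \subseteq f_{*}(Q)\sqcup\{\sigma\} \supseteq \{\sigma\}$, rather than as a chain-level cone $\sigma * f_{*}z$ on each cycle. The two formulations carry the same content, and both rely on the single observation you isolate, namely that a disjoint $p$-cycle on new letters commutes with everything in the image.
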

\begin{proof}
We shall prove the stronger claim that for any injection $f\colon S \emb T$ with $|T| - |S| \geq p$, the simplicial map $f_{*} \colon \clp{\sym{S},a} \rarr \clp{\sym{T},a}$ is null-homotopic. In particular, there is no element of these $\FI$-modules that survives transition maps more than $p$ degrees beyond.
 To see this, pick $B \subseteq T - f(S)$ with size $|B| = p$. Now pick a $p$-cycle $\sigma \in \sym{T}$ which permutes $B$ and leaves $T-B$ fixed. Because $f(S)$ and $B$ are disjoint, we have a well-defined order preserving map 
\begin{align*}
 \clp{\sym{S},a} &\rarr \clp{\sym{T},a} \\
 Q &\mapsto f_{*}(Q) \sqcup \{\sigma\} \, .
\end{align*}
The relations $f_{*}(Q) \subseteq f_{*}(Q) \sqcup \{\sigma\} \supseteq \{\sigma\}$ prove that $f_{*}$ is homotopic to the constant map $Q \mapsto \{\sigma\}$. The same argument works for $\clp{\sym{\bul}}$.
\end{proof}

\begin{prop} \label{chain-fg}
 For every $k \geq 0$ and $a \geq 1$, the $\FI$-module $\chain_{k}(\clp{\sym{\bul},a})$, defined by the $k$-th chain groups,
is generated in degrees $\leq (k+1)ap$.
\end{prop}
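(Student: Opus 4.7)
The plan is to exhibit, for each $k$-simplex $Q$ in $\clp{\sym{T},a}$, a sub-simplex in $\clp{\sym{U},a}$ for some $U \subseteq T$ of size at most $(k+1)ap$ that maps onto $Q$ under the inclusion-induced transition map. Since $\chain_{k}(\clp{\sym{\bul},a})_{T}$ is the free abelian group on the $k$-simplices of $\clp{\sym{T},a}$, this suffices: the collection of basis elements supported on sets of size at most $(k+1)ap$ generates the whole $\FI$-module.

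So let $Q = \{\sigma_{0}, \sigma_{1}, \ldots, \sigma_{k}\}$ be a $k$-simplex of $\clp{\sym{T},a}$. By the defining property of $\clp{\sym{T},a}$, each $\sigma_{i}$ is a product of at most $a$ disjoint $p$-cycles; hence the support $\supp(\sigma_{i}) \subseteq T$ has size at most $ap$. Set
\begin{align*}
U \; := \; \bigcup_{i=0}^{k} \supp(\sigma_{i}) \; \subseteq \; T \, ,
\end{align*}
so that $|U| \leq (k+1)ap$. Every $\sigma_{i}$ fixes $T \setminus U$ pointwise, so it is naturally the image under the inclusion $\iota\colon U \emb T$ of a well-defined element $\wt{\sigma_{i}} \in \sym{U}$ of the same cycle type. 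The elements $\wt{\sigma_{0}}, \ldots, \wt{\sigma_{k}}$ still pairwise commute, still have order $p$, and are each still a product of at most $a$ disjoint $p$-cycles, so $\wt{Q} := \{\wt{\sigma_{0}}, \ldots, \wt{\sigma_{k}}\}$ is a $k$-simplex of $\clp{\sym{U},a}$ satisfying $\iota_{*}(\wt{Q}) = Q$.

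This directly yields the claim, with no real obstacle. The statement follows because, in $\FI$-module language, we have exhibited $\chain_{k}(\clp{\sym{\bul},a})$ as the $\FI$-span of its values on finite sets of size $\leq (k+1)ap$.
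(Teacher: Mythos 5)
Your proof is correct and follows essentially the same approach as the paper: restrict a $k$-simplex (a $(k+1)$-element commuting set) to the union of supports of its elements, which has size at most $(k+1)ap$, and observe it pulls back along the inclusion. The paper phrases this slightly differently (counting symbols in cycle decompositions) but the argument is the same.
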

\begin{proof}
 One needs to pay attention to the degree shift in the clique complex construction: $\chain_{k}(\mathbf{K}(\Lambda))$ has the $(k+1)$-cliques of the graph $\Lambda$ as a basis. Thus $\chain_{k}(\clp{\sym{S},a})$ is the direct sum of subsets $Q \subseteq \sym{S}$ of size $k+1$, such that
\begin{itemize}
 \item every $\sigma \in Q$ is a product of at most $a$ disjoint $p$-cycles, and
 \item every $\sigma,\tau \in Q$ commute with each other.
\end{itemize}
Now if we were to write out all the elements in $Q$ in their cycle decompositions, the total number of symbols we see would be at most $(k+1)ap$. Thus if we write $B$ for the set of elements in $S$ that is moved by one of $\sigma \in Q$, then $|B| \leq (k+1)ap$ and there exists $Q_{B} \subseteq \sym{B}$ with the same size and properties above.  Therefore $Q_{B} \in \chain_{k}(\clp{\sym{A},a})$ and writing $\iota \colon B \emb S$ for the inclusion, we have $\iota_{*}(Q_{B}) = Q$. 
\end{proof}

At this point, the Noetherian property of $\FI$-modules due to Church--Ellenberg--Farb--Nagpal \cite[Theorem A]{cefn}, suffices to prove the high acyclicity part of Theorem \ref{main-thm}. We can also reduce Conjecture \ref{sym-conj} to an $\FI$-module statement:

\begin{conjbis}{sym-conj}
 There exists a chain complex of finitely generated $\FI$-modules $\mathbf{C_{*}}$ such that $\co_{k}(\mathbf{C_{*}}) = \co_{k}(\clp{\sym{\bul}})$.
\end{conjbis}

To get the explicit vanishing ranges in Theorem \ref{main-thm}, we will need to put in a little more work. Let us write $\sh$ for the category of \textbf{partial bijections}, as in \cite[Definition 4.1.1]{cef}. We call a finite filtration of an $\FI$-module a \textbf{finite $\sh$-filtration} if each factor $\FI$-module in the filtration extends to an $\sh$-module.

\begin{prop} \label{chain-has-filt}
 For every $k \geq 0 $ and $a \geq 1$, the $\FI$-module $\chain_{k}(\clp{\sym{\bul},a})$ has a finite $\sh$-filtration.
\end{prop}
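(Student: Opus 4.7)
The plan is to filter $\chain_{k}(\clp{\sym{\bul},a})$ by the support size of a basis simplex. For a $k$-simplex $Q = \{\sigma_{0}, \ldots, \sigma_{k}\}$ of $\clp{\sym{S},a}$, let its support $B_{Q} \subseteq S$ be the set of symbols moved by some $\sigma_{i}$. I set $F_{m} \chain_{k}(\clp{\sym{S},a})$ to be the subgroup spanned by basis simplices with $|B_{Q}| \leq m$. Since an injection $f \colon S \emb T$ sends $B_{Q}$ to $f(B_{Q})$ without changing its size, each $F_{m}$ is an $\FI$-submodule. The support bound derived inside the proof of Proposition \ref{chain-fg} gives $F_{(k+1)ap} = \chain_{k}(\clp{\sym{\bul},a})$, so the filtration is finite.

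The key step is to identify each quotient $F_{m}/F_{m-1}$ with an induced $\FI$-module. Let $W_{m}$ denote the $\sym{m}$-representation given by the free abelian group on $(k+1)$-subsets $Q \subseteq \sym{[m]}$ that are pairwise commuting, consist of products of at most $a$ disjoint $p$-cycles, and whose support equals all of $[m]$. At a finite set $S$, the quotient $F_{m}/F_{m-1}$ evidently has a basis indexed by pairs $(B, Q)$ with $B \subseteq S$ of size $m$ and $Q$ a full-support simplex on $B$, with the natural $\FI$-transition maps acting by relabeling. This is precisely the description of $\induce(W_{m})_{S}$.

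To conclude, I appeal to \cite[Theorem 4.1.5]{cef}, which shows that every induced $\FI$-module $\induce(W)$ extends canonically to an $\sh$-module. Hence each factor of our filtration extends, yielding the desired finite $\sh$-filtration.

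I expect the main bookkeeping task to be the identification in the second paragraph, rather than a conceptual obstacle. If one wishes to avoid invoking the CEF structure theorem, the $\sh$-action on $F_{m}/F_{m-1}$ can be made explicit: a partial bijection $\phi \colon S \rightsquigarrow T$ with domain $S' \subseteq S$ sends a full-support simplex $Q$ on $B$ to $\phi(Q)$ when $B \subseteq S'$ and to zero otherwise; this descends to the quotient because simplices with $B \not\subseteq S'$ have images of smaller support, landing in the piece we have already quotiented out.
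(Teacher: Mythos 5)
Your proof is correct but takes a genuinely different route from the paper. The paper avoids constructing an explicit filtration: it first checks that the \emph{total} chain module $V = \bigoplus_{k}\chain_{k}(\clp{\sym{\bul},a})$ carries an $\sh$-structure (so $V = M(W)$ by \cite[Theorem 4.1.5]{cef}), then uses the vanishing $\co_{i}^{\FI}(V)=0$ for $i \geq 1$ (Ramos, \cite[Proposition 2.18]{ramos-fig}) together with additivity to conclude the direct summand $\chain_{k}(\clp{\sym{\bul},a})$ also has vanishing higher $\FI$-homology, and finally invokes Ramos's homological characterization \cite[Theorem B]{ramos-fig} of $\FI$-modules admitting a finite $\sh$-filtration. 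This yields existence abstractly. Your argument instead constructs the filtration explicitly by support size, identifies each graded piece $F_{m}/F_{m-1}$ with an induced module $M(W_{m})$ (or, equivalently, equips it directly with a functorial $\sh$-action), and then cites \cite[Theorem 4.1.5]{cef} only in the easy direction that induced modules carry $\sh$-structures. Your route is more constructive and elementary: it bypasses the derived-functor machinery and the Ramos characterization entirely, and produces an explicit filtration rather than a mere existence statement. The paper's route is ``lazier'' in that it only needs the (very easy) $\sh$-action on the big module $V$ and lets the homological criterion do the rest, at the cost of invoking heavier abstract results. One small wording point in your write-up: when giving the explicit $\sh$-action, you do not need the ``descends to the quotient'' framing at all --- since you define the action directly on a basis of $F_{m}/F_{m-1}$ (full-support $m$-simplices), you only need to check functoriality, which holds because whenever $B \not\subseteq S'$ the image is declared zero and this is stable under composition of partial bijections.
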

\begin{proof}
We first show that the $\FI$-module 
\begin{align*}
 V := \bigoplus_{k=0}^{\infty} \chain_{k}(\clp{\sym{\bul},a})
\end{align*}
extends to an $\sh$-module. To that end, take a partial bijection 
$\phi \colon S \supseteq A \xleftrightarrow{f} B \subseteq T$ and $Q \subseteq \sym{S}$ of commuting elements which are products of at most $a$ disjoint $p$-cycles, noting that $V$ is spanned by such $Q$. Now we can simply declare $\phi_{*}(Q) := 0$ if $Q \cap \sym{A}$ is empty, and $\phi_{*}(Q) := f_{*}(Q \cap \sym{A})$, which is a similar set of elements in $\sym{T}$ with possibly smaller size than $Q$. Checking functoriality is straightforward.

Using the classification of $\sh$-modules obtained by Church--Ellenberg--Farb \cite[Theorem 4.1.5]{cef}, we have $V = M(W)$ for some $\FB$-module $W$: here $\FB$ is the category of finite sets and bijections, and $M$ is the left adjoint of the restriction functor $\lMod{\FB} \rarr \lMod{\FI}$. Conversely every $\FI$-module of the form $M(X)$ extends to an $\sh$-module. 
 
  A result of Ramos \cite[Proposition 2.18]{ramos-fig} says that $\co_{i}^{\FI}(V) = 0$ for every $i \geq 1$, where $\co_{0}^{\FI} \colon \lMod{\FI} \rarr \lMod{\zz}$ is a certain right exact functor (we do not need its definition) and $\{\co_{i}^{\FI} : i \geq 1 \}$ are its right derived functors. Being a direct summand of $V$, the finitely generated (Proposition \ref{chain-fg}) $\FI$-module $\chain_{k}(\clp{\sym{\bul},a})$ also vanishes under $\co_{i}^{\FI}$ for $i \geq 1$. Hence it has a finite $\sh$-filtration by the homological characterization of Ramos \cite[Theorem B]{ramos-fig}.
\end{proof}

\begin{proof}[Proof of \textbf{\emph{Theorem \ref{main-thm}}}]
 By the definition of homology, we have
\begin{align*}
\co_{k}(\clp{\sym{\bul},a}) &= \coker \left( \chain_{k+1}(\clp{\sym{\bul},a}) \longrightarrow V^{k}  \right) \, , \\
\text{where } V^{k} &:= \ker\big( \chain_{k}(\clp{\sym{\bul},a}) \rarr \chain_{k-1}(\clp{\sym{\bul},a}\big) \, 
\end{align*}
as $\FI$-modules. We also observe that we would get $V^{k+1}$ if we replaced  $\coker$ with $\ker$ above. All the chain modules involved have finite $\sh$-filtrations by Corollary \ref{chain-has-filt}, thus using the notation of Church--Miller--Nagpal--Reinhold \cite{cmnr-range}, their \emph{local degree} $\local = \max\{h^{i}: i \geq 0\}$ is equal to $-1$ \cite[Corollary 2.13]{cmnr-range} (originially due to Li--Ramos \cite[Theorem F]{li-ramos}), and their \emph{stable degree} $\weak$ is at most $(k+2)ap$, $(k+1)ap$, $kap$, respectively, by Proposition \ref{chain-fg} and \cite[Proposition 2.6(4)]{cmnr-range}. Thus  by the proof of \cite[Proposition 3.3]{cmnr-range} we get 
\begin{align*}
 h^{0}(\co_{k}(\clp{\sym{\bul},a})) 
 \leq \max \left\{h^{0}(V^{k}), -1,  h^{2}(V^{k+1}) \right\} \leq 2(k+2)ap - 2  
\end{align*}
Because $\co_{k}(\clp{\sym{\bul},a})$ is torsion by Corollary \ref{torsion}, we are done due to the definition of $h^{0}$ \cite[2.5]{cmnr-range}.
\end{proof}

\begin{rem}[\textbf{Kneser graphs and hypergraph matching complexes}]
 Given a finite set $S$, consider the graph $\text{Kneser}_{p}(S)$ whose elements are subsets of $S$ with size $p$ (here $p$ need not be a prime) such that the edges connect disjoint subsets. Note that $\text{Kneser}_{2}(S) = \clt{\sym{S},1}$. The clique complex $\text{M}_{p}(S) := \mathbf{K}(\text{Kneser}_{p}(S))$ is often referred to as a \textbf{hypergraph matching complex} in the combinatorics literature. Virtually the same arguments we used for proving Theorem \ref{main-thm} show that the complex $\text{M}_{p}(S)$ is $k$-acyclic if $|S| \geq 2(k+2)p - 1$. However, better vanishing ranges have been known for some time. The most recent of these (to my knowledge) is due to Athanasiadis, who showed \cite[Theorem 1.2]{athanasiadis} that $\text{M}_{p}(S)$ is $k$-acyclic if $|S| \geq (k+2)p + k + 1$. The graphs $\text{Kneser}_{p}(S)$ themselves have received recent attention \cite{ramos-white-FI-graph}, \cite{ramos-speyer-white-FI-set} with an $\FI$ point of view.
\end{rem}

\bibliographystyle{hamsalpha}
\bibliography{stable-boy}

\end{document}